\newtheorem{theorem}{Theorem}[section]
\newtheorem{proposition}[theorem]{Proposition}
\newtheorem{lemma}[theorem]{Lemma}
\newtheorem*{TA}{Theorem A}
\newtheorem*{TB}{Theorem B}
\theoremstyle{remark}
\newtheorem{remark}[theorem]{Remark}
\numberwithin{equation}{section}
\DeclareMathOperator{\card}{card}
\DeclareMathOperator{\supp}{supp}
\newcommand{\JM}{Mierczy\'nski}
\newcommand{\Benaim}{Bena\"{\i}m}
\newcommand{\Mane}{Ma\~n\'e}
\newcommand{\Terescak}{Tere\v{s}\v{c}\'ak}
\newcommand{\JDE}{J. Differential Equations}
\newcommand{\pw}{a.e.}
\newcommand{\norm}[1]{\ensuremath{\lVert#1\rVert}}
\newcommand{\reals}{\ensuremath{\mathbb{R}}}
\newcommand{\Rn}{\ensuremath{\reals^{n}}}
\newcommand{\sphere}{\ensuremath{\mathbb{S}}}
\newcommand{\0}{\ensuremath{\{0\}}}
\newcommand{\x}{\ensuremath{\{x\}}}
\newcommand{\xI}{\ensuremath{x^{[I]}}}
\newcommand{\Breg}{\ensuremath{B_{\mathrm{reg}}}}
\newcommand{\N}{\ensuremath{\{1,\dots,n\}}}
\newcommand{\Merg}{\mathbf{M}_{\mathrm{erg}}}
\newcommand{\Cplus}{\ensuremath{C\setminus\0}}
\newcommand{\leI}{\ensuremath{\mathrel{{\leq}_{I}}}}
\newcommand{\lI}{\ensuremath{\mathrel{{<}_{I}}}}
\newcommand{\llI}{\ensuremath{\mathrel{{\ll}_{I}}}}
\newcommand{\VI}{\ensuremath{V_{I}}}
\newcommand{\VIi}{\ensuremath{V_{I\setminus i}}}
\newcommand{\CalB}{\ensuremath{\mathcal{B}}}
\newcommand{\CalBi}{\ensuremath{\CalB^{(i)}}}
\newcommand{\CalC}{\ensuremath{\mathcal{C}}}
\newcommand{\CalS}{\ensuremath{\mathcal{S}}}
\newcommand{\dC}{\ensuremath{\partial C}}
\newcommand{\Co}{\ensuremath{C^{\circ}}}
\newcommand{\CI}{\ensuremath{C_{I}}}
\newcommand{\CIo}{\ensuremath{\CI^{\circ}}}
\newcommand{\dCI}{\ensuremath{\partial\CI}}
\newcommand{\dSigma}{\ensuremath{\partial\Sigma}}
\newcommand{\Sigmao}{\ensuremath{\Sigma^{\circ}}}
\newcommand{\SigmaI}{\ensuremath{\Sigma_{I}}}
\newcommand{\SigmaIo}{\ensuremath{\SigmaI^{\circ}}}
\newcommand{\dSigmaI}{\ensuremath{\partial\SigmaI}}
\newcommand{\lambdamin}{\ensuremath{\lambda_{\mathrm{min}}}} %
\begin{document}
\title{On smoothness of carrying simplices}
\author{Janusz Mierczy\'nski}
\address{Institute of Mathematics\\
Wroc{\l}aw University of Technology\\
Wybrze\.ze Wyspia\'nskiego 27\\
PL-50-370 Wroc{\l}aw\\
Poland}
\email{mierczyn@banach.im.pwr.wroc.pl}
\thanks{First published in \emph{Proc. Amer. Math. Soc.} \textbf{127}(2), February 1999, pp. 543--551, published by the American Mathematical Society. \copyright\ 2016 American Mathematical Society.
\\
Research supported by KBN grant 2 P03A 076 08 (1995--97)}
\subjclass{Primary 34C30, 34C35; Secondary 58F12, 92D40}
\date{}
\commby{Hal L. Smith}
\begin{abstract}
We consider dissipative strongly competitive systems
$\dot{x}_{i}=x_{i}f_{i}(x)$ of ordinary differential equations.  It is
known that for a wide class of such systems there exists an invariant
attracting hypersurface $\Sigma$, called the carrying simplex.  In this
note we give an amenable condition for $\Sigma$ to be a $C^{1}$
submanifold-with-corners.  We also provide conditions, based on a recent
work of M. \Benaim\ \textit{On invariant hypersurfaces of strongly
monotone maps}, \JDE\ \textbf{137} (1997), 302--319, guaranteeing that
$\Sigma$ is of class $C^{k+1}$.
\end{abstract}
\maketitle

\section{Introduction}
\label{Introduction}
We consider systems of ordinary differential equations (ODE's) of class
(at~least) $C^{1}$
\begin{equation*}
{\dot x}_{i}=x_{i}f_{i}(x) \tag{E}
\end{equation*}
on the nonnegative orthant $C:=\{x\in\Rn:x_{i}\ge0$ for $1\le i\le n\}$,
$n\ge3$.

We write $F_{i}(x)=x_{i}f_{i}(x)$,
$F=(F_{1},\dots,F_{n})$.  The symbol $DF= [{\partial
F}_{i}/{\partial x}_{j}]_{i,j=1}^{n}$ stands for the
derivative matrix of the vector field $F$.  The local flow
generated by (E) on $C$ will be denoted by
$\phi=\{\phi_{t}\}$.  A subset $B\subset C$ is {\em
invariant\/} [resp.~{\em forward~invariant\/}] if
${\phi}_{t}x\in B$ for all $(t,x)\in\reals\times B$
[resp.~for all $(t,x)\in[0,\infty)\times B$] for which
${\phi}_{t}x$ is defined.  For $x\in C$, $B\subset C$ the
symbols $\omega(x)$, $\alpha(x)$, $\omega(B)$, $\alpha(B)$
have their usual meanings (see e.g.~Hale \cite{Hale}).  A
point $x\in C$ is a {\em rest point\/} if $\phi_{t}x=x$ for
each $t\in\reals$ (alternatively, if $F(x)=0$).  An
invariant subset $B$ of a compact invariant set $S$ is
called an {\em attractor\/} (resp.~a {\em repeller\/}) {\em
relative to $S$\/} if there is a relative neighborhood $U$
of $B$ in $S$ such that $\omega(U)=B$ (resp.~$\alpha(U)=B$).
For an attractor $B$ relative to $S$, by the repeller {\em
complementary\/} to $B$ we understand the set $\{x\in
S:\omega(x)\cap B=\emptyset\}$.  The attractor {\em
complementary\/} to a repeller $R$ is defined in an
analogous way.

System (E) is {\em dissipative\/} if there is a compact set
$B\subset C$ such that for each bounded $D\subset C$ its
$\omega$\nobreakdash-\hspace{0pt}limit set $\omega(D)$ is a
nonempty subset of $B$.  By standard results on global
attractors (see~\cite{Hale}), for a dissipative system
(E) there exists a compact invariant set $\Gamma\subset C$
(the {\em global attractor\/} for (E)) such that
$\omega(D)\subset\Gamma$ for each bounded $D\subset C$.
Evidently, $0\in\Gamma$.

For $I\subset\N$ denote
\begin{align*}
\CI&:=\{x\in C:x_{i}=0\text{ for }i\in I\},\\
\CIo&:=
\{x\in\CI:x_{j}>0\text{ for }j\notin I\},\\
\dCI&:=\CI\setminus\CIo.
\end{align*}
From the form of (E) it follows readily that any $\CI$, as
well as $\dCI$ and $\CIo$, is invariant.  We denote by
(E)$_{I}$ the restriction of system (E) to $\CI$.  Instead
of $C_{\emptyset}^{\circ}$, ${\partial}C_{\emptyset}$, we
write $\Co$, $\dC$.  $I'$ means $\N\setminus I$.

If system (E) is dissipative, so are all of its subsystems
(E)$_{I}$.  For each $I\subset\N$, the global attractor
$\Gamma_{I}$ for (E)$_{I}$ equals $\Gamma\cap\CI$.

System (E) is called {\em strongly competitive\/} if
$(\partial f_{i}/\partial x_{j})(x)<0$ for each $1\le
i,j\le n$, $i\ne j$, $x\in C$.  A strongly competitive
system is called {\em totally competitive\/} if $(\partial
f_{i}/\partial x_{i})(x)<0$ for $1\le i\le n$, $x\in C$.
Such systems describe a community of $n$ interacting
species where the growth of each species inhibits the
growth of any other.

Throughout the rest of the paper the standing assumption
will be:
\par
{\em \textup{(E)} is a $C^{1}$ dissipative strongly
competitive system of ODE's satisfying the following:
\par \textup{1.} $\0$ is a repeller relative to $\Gamma$.
\par \textup{2.} At each rest point $x\in C\setminus\0$ one
has $(\partial f_{i}/\partial x_{i})(x)<0$ for $1\le i\le
n$.}

The following important result was established by M. W.
Hirsch (\cite{Moe}).
\begin{proposition}
\label{projection}
The attractor $\Sigma\subset\Gamma$ complementary to the
repeller $\0$ is homeomorphic via radial projection to the
standard $(n-1)$\nobreakdash-\hspace{0pt}simplex
$\Delta:=\{x\in C:x_{1}+\dots+x_{n}=1\}$. Moreover, the
global attractor $\Gamma$ equals the convex hull of
$\Sigma\cup\0$.
\end{proposition}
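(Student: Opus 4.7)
Both assertions go back to M. W. Hirsch's construction of the carrying simplex; my plan is to combine the abstract attractor--repeller decomposition of $\Gamma$ with the strong monotonicity of the time-reversed flow, which is the source of rigidity in strongly competitive systems.

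First I would apply the attractor--repeller decomposition to the pair $(\0,\Sigma)\subset\Gamma$: every orbit in $\Gamma\setminus(\0\cup\Sigma)$ has $\alpha$-limit $\0$ and $\omega$-limit contained in $\Sigma$. In particular $\Sigma\subset C\setminus\0$, so the radial projection $\pi(x):=x/(x_{1}+\dots+x_{n})$ is well defined and continuous on $\Sigma$; as $\Sigma$ is compact and $\Delta$ Hausdorff, it suffices to prove that $\pi\colon\Sigma\to\Delta$ is a bijection, after which the convex hull assertion will follow by examining the rays from $\0$.

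For injectivity (which I expect to be the main obstacle), suppose $sv,tv\in\Sigma$ for some $v\in\Delta$ with $0<s<t$; then $sv\ll tv$ componentwise. Because $\partial f_{i}/\partial x_{j}<0$ for $i\neq j$, the matrix $-DF$ has positive off-diagonal entries on $\Co$, so by Kamke's theorem the backward flow is strongly order-preserving there, propagating $\phi_{-\tau}(sv)\ll\phi_{-\tau}(tv)$ to all $\tau\geq0$. Standard convergence theory for strongly monotone semiflows, applied to the backward flow on the compact invariant set $\Sigma$, then forces the $\alpha$-limits of $sv$ and $tv$ to be rest points that are strictly ordered componentwise; Hypothesis~2, combined with an induction on $n$ through the invariant faces $\CI$ (each subsystem (E)$_{I}$ inheriting the standing assumptions), precludes such a pair. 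This is the step where strong competitiveness and Hypothesis~2 do all the essential work.

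For surjectivity, fix $v\in\Delta$ and set $t^{*}(v):=\sup\{t\geq0:tv\in\Gamma\}$. Hypothesis~1 gives $DF(\0)=\mathrm{diag}(f_{i}(\0))$ with each $f_{i}(\0)>0$, so $\0$ is linearly repelling on $C$ and a neighborhood of $\0$ in $C$ lies in $W^{u}(\0)\subset\Gamma$; hence $t^{*}(v)>0$, while compactness of $\Gamma$ gives $t^{*}(v)<\infty$ with $t^{*}(v)v\in\Gamma$. The decomposition of the first paragraph together with the injectivity just proved forces $t^{*}(v)v\in\Sigma$, and the same maximality argument yields $[0,t^{*}(v)]v\subset\Gamma$; varying $v\in\Delta$ then gives $\Gamma=\mathrm{conv}(\Sigma\cup\0)$.
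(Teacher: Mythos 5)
The paper offers no proof of this proposition --- it is quoted from Hirsch \cite{Moe} --- so your proposal can only be measured against Hirsch's argument. Your overall architecture (attractor--repeller decomposition of $\Gamma$, monotonicity of the time-reversed flow, ray-by-ray analysis) is the right one, but the injectivity step, which you correctly single out as the crux, has a genuine gap. First, for $v\in\partial\Delta$ the relation $sv\ll tv$ is false (the coordinates indexed by $I=\{i:v_{i}=0\}$ coincide), so strong monotonicity of the backward flow is available only for the relative order $\llI$ on the open face $\CIo$; your parenthetical ``induction through the faces'' would have to carry the entire argument and is not spelled out. Second, and more seriously, the convergence step is not a standard theorem: applied to two specific ordered points, the Limit Set Dichotomy for strongly monotone flows yields either $\alpha(sv)\llI\alpha(tv)$ with no claim that these limit sets are singletons or rest points, or else $\alpha(sv)=\alpha(tv)\subset E$ --- in which case there is no ordered pair of equilibria at all and your intended contradiction evaporates. (Hirsch's generic convergence theorems say nothing about two prescribed points.) Third, even granted an ordered pair of rest points in $\Sigma$, Hypothesis~2 --- negativity of the diagonal entries at rest points --- does not preclude it; what forbids ordered pairs in $\Sigma$ is the unorderedness of the carrying simplex itself (Lemma~2.5 of \cite{Moe}, which this paper invokes in proving Lemma~\ref{unorder}), and that is essentially the statement you are trying to establish. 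The standard mechanism is more elementary: the basin of repulsion $B$ of $\0$ is an open \emph{lower} set (if $0\le x\le y$ and $\phi_{-t}y\to0$, then $0\le\phi_{-t}x\le\phi_{-t}y$ forces $\phi_{-t}x\to0$), $\Sigma=\Gamma\setminus B$ is its relative boundary, and the boundary of an open lower set cannot contain two points related by $\llI$; strong monotonicity on $\CIo$ is needed only to upgrade $\lI$ to $\llI$ after flowing backwards.

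The surjectivity paragraph also has unproved steps. Hypothesis~1 (that $\0$ is a repeller \emph{relative to} $\Gamma$) does not give $f_{i}(0)>0$ --- compare $\dot u=u\cdot u$ on a half-line, where $0$ is a repeller although $f(0)=0$ --- and a repeller relative to $\Gamma$ has a repulsion basin that is a neighborhood of $0$ in $\Gamma$, not in $C$, so the inclusion of a $C$-neighborhood of $0$ in $\Gamma$ requires a separate argument. Moreover, you use twice (for $[0,t^{*}(v)]v\subset\Gamma$ and, implicitly, to locate $t^{*}(v)v$ in $\Sigma$ rather than in the basin of $\0$) that $\{t\ge0:tv\in\Gamma\}$ is an interval; this is true because $\Gamma$ is itself a lower set, by the same backward comparison argument as above, but you never establish it. Finally, what the ray-by-ray argument produces is the join $\bigcup_{\sigma\in\Sigma}[0,\sigma]$, a set star-shaped about $0$; identifying it with the convex hull of $\Sigma\cup\0$ is an extra (and nontrivial) convexity assertion that your last sentence asserts rather than derives.
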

Following M. L. Zeeman~\cite{Marylou}, the invariant
compact set $\Sigma$ is referred to as the {\em carrying
simplex\/} for (E).   In the ecological interpretation, the
carrying simplex can be thought of as expressing the
balance between the growth of small populations ($\0$ is a
repeller) and the competition of large populations
(dissipativity).

M. W. Hirsch in~\cite{Moe} asked about sufficient
conditions for the carrying simplex $\Sigma$ to be of class
$C^{1}$.  The time reverse flow $\{\phi_{-t}\}_{t\ge0}$
restricted to the invariant set $\Co$ is strongly monotone
and its derivative flow is strongly positive (for these
terms see H. L. Smith's monograph~\cite{Hal}).  Therefore,
when (E) possesses a repeller $R\subset\Sigma\cap\Co$
relative to $\Sigma$ we can utilize a powerful recent
result of I.~\Terescak~\cite{Ignac} on nonmonotone
manifolds to conclude that the repulsion basin
$B(R):=\{x\in\Sigmao:\alpha(x)\subset R\}$ is a $C^{1}$
hypersurface.  However, even in that case \Terescak's
theorem does not apply to the whole of $\Sigma$, for the
time reverse flow fails to be strongly monotone on the
boundary $\dC$.  Moreover, if we assume that (E) is
permanent (a natural assumption from the applied viewpoint)
then there is an attractor $A$ having the whole $\Co$ as
its attraction basin, hence its repulsion basin (relative
to $\Sigma$) equals $A$.  In his paper~\cite{JM} the
present author gave a fairly weak condition implying the
$C^{1}$ smoothness of $\Sigma$.  It was done, however, at
the expense of making use (for $n\ge5$) of Pesin's theory
of invariant measurable families of embedded manifolds,
which compels one to assume that $f$ has H\"older
continuous derivatives.

In this note we show that a well-known, robust, and readily
testable condition (see (A)) is enough to conclude that
$\Sigma$ is $C^{1}$.  Because our proofs exploit Oseledets'
theory of Lyapunov exponents, it suffices to assume $f$ to
be $C^{1}$ to get $C^{1}$ smoothness of $\Sigma$.  Next,
conditions are given, based on recent results of
M.~\Benaim~\cite{Michel}, for the carrying simplex to
possess higher~order smoothness.

I would like to thank Michel \Benaim\ for sending me a
preprint of~\cite{Michel}.

\section{Statement of main results}
\label{Statement}
For $I\subset\N$ put
\begin{equation*}
\SigmaI:=\CI\cap\Sigma, \quad
\SigmaIo:=\CIo\cap\Sigma,\quad
\dSigmaI:=\dCI\cap\Sigma.
\end{equation*}
We will call $\SigmaI$ a {\em
$k$\nobreakdash-\hspace{0pt}dimensional face\/} of
$\Sigma$, where $k=n-1-\card{I}$.  Evidently all $\SigmaI$,
as well as $\SigmaIo$ and $\dSigmaI$, are invariant. For
$I\subset\N$, the face $\SigmaI$ is the carrying simplex
for subsystem (E)$_{I}$. The
$0$\nobreakdash-\hspace{0pt}dimensional face $\Sigma_{i'}$
consists of a single rest point
$x^{(i)}=(0,\dots,0,x_{i}^{(i)},0,\dots,0)$ with
$x_{i}^{(i)}>0$ (called the {\em
$i$\nobreakdash-\hspace{0pt}th axial rest point\/}).

Let $V=\{v=(v_{1},\dots,v_{n}):v_{i}\in\reals\}$ stand for
the vector space of all free
$n$\nobreakdash-\hspace{0pt}dimensional vectors
(in~particular, we write the tangent bundle of the orthant
$C$ as $TC=C\times V$).  Depending on the context,
$\norm{\cdot}$ may mean the Euclidean norm of a vector, or
the operator norm of a matrix, associated with the
Euclidean norm.  For $I\subset\N$, we denote
\begin{equation*}
\VI:=\{v\in V:v_{i}=0\text{ for }i\in I\}.
\end{equation*}

For any two points $x$, $y\in\CI$, we write $x\leI y$ if
$x_{i}\le y_{i}$ for all $i\in I'$, and $x\lI y$ if $x\leI y$ and
$x\ne y$.  Moreover, $x\llI y$ if $x_{i}<y_{i}$ for all
$i\in I'$.  For $I=\emptyset$ we write simply $\le$, $<$,
$\ll$.  The reversed symbols are used in the obvious way.
As each $(\CI,\leI)$ is a lattice, we can define, for
$I\subset\N$ with $\card{I}\le n-1$
\begin{equation*}
\xI:=\bigvee_{i\in I'}x^{(i)}.
\end{equation*} where
It is easy to see that $x^{[J]}\lI\xI$ for $I\subsetneq J$.

The following result probably belongs to the folk~lore in
the theory of competitive systems, but I have not been able
to locate its proof.
\begin{lemma}
\label{unorder}
For each $I\subset\N$ with $1\le\card{I}\le n-2$ we have
$y\lI\xI$ for all $y\in\SigmaI$.
\end{lemma}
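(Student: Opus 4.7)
My plan is to prove the lemma by a maximum-principle argument on the invariant set $\SigmaI$, applied one coordinate at a time. Fix $j\in I'$ and define $M_j:=\max\{y_j:y\in\SigmaI\}$, which exists by compactness of $\SigmaI$. I will show $M_j\le x^{(j)}_j=\xI_j$, and separately rule out equality $y=\xI$; together these give $y\lI\xI$.

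First, since $x^{(j)}\in\Sigma\cap\CI=\SigmaI$ (we have $x^{(j)}\in\CI$ because $x^{(j)}_i=0$ for $i\in I$, and $x^{(j)}\in\Sigma$ since $\omega(x^{(j)})=\{x^{(j)}\}$ is disjoint from $\0$), we get $M_j\ge x^{(j)}_j>0$. Pick a maximizer $y^*\in\SigmaI$ with $y^*_j=M_j$. Invariance of $\SigmaI$ under $\phi$ forces $t\mapsto(\phi_t y^*)_j$ to attain its global maximum at $t=0$, so $F_j(y^*)=y^*_jf_j(y^*)=0$ and, since $y^*_j>0$, $f_j(y^*)=0$. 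Now split into cases. If $y^*_k=0$ for all $k\in I'\setminus\{j\}$, then $y^*$ lies on the positive $j$-axis, so $y^*\in\Sigma_{\{j\}'}=\{x^{(j)}\}$ and $y^*_j=\xI_j$. Otherwise some $y^*_k>0$ with $k\in I'\setminus\{j\}$; by strong competition, $f_j(y^*_j e_j)>f_j(y^*)=0$, where $e_j$ is the $j$th standard basis vector. The one-dimensional subsystem on the $j$-axis (in which $\0$ is a repeller relative to $\Gamma_{\{j\}'}$ and $x^{(j)}$ is the unique positive rest point) satisfies $f_j(z e_j)>0$ exactly on $[0,x^{(j)}_j)$, so $y^*_j<x^{(j)}_j=\xI_j$. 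In either case $M_j\le\xI_j$, proving $y\leI\xI$ for every $y\in\SigmaI$.

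To upgrade $\leI$ to $\lI$ I rule out $y=\xI$. Since $\card{I'}\ge 2$, for each $j\in I'$ the point $\xI$ satisfies $\xI_k=x^{(k)}_k>0=x^{(j)}_k$ for some $k\in I'\setminus\{j\}$, so strong competition gives $f_j(\xI)<f_j(x^{(j)})=0$ and hence $F_j(\xI)<0$. Were $\xI\in\SigmaI$, then $\phi_{-t}(\xI)\in\SigmaI$ for small $t>0$ would have $j$th coordinate strictly greater than $\xI_j=M_j$, contradicting the definition of $M_j$. Thus $y\ne\xI$, and $y\lI\xI$.

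The main delicate point is the one-dimensional identification of the sign of $f_j$ along the $j$-axis, which relies on $\0$ being a repeller relative to $\Gamma_{\{j\}'}$ (giving $f_j(0)>0$) together with dissipativity and uniqueness of the positive rest point $x^{(j)}$. With this in hand the remainder is a clean maximum-principle argument on the invariant compact set $\SigmaI$, and I foresee no further obstacles.
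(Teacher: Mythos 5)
Your proof is correct, but it takes a genuinely different route from the paper's. The paper argues by contradiction and leans on Hirsch's Lemma~2.5 (no two points of a carrying simplex are related by the strong order): if $\xI\in\SigmaI$, then $F_i(\xI)<0$ for all $i\in I'$ produces the forbidden strongly ordered pair $\phi_t\xI\llI\xI$ inside $\SigmaI$; if instead some $y\in\SigmaI$ has $y_k>x^{[I]}_k$, the paper passes to the face $\Sigma_J$ determined by the zero coordinates of $y$ and exhibits the forbidden relation $y\gg_J x^{(k)}$ there. You replace the unorderedness lemma with a coordinate-wise maximum principle: the maximizer $y^*$ of the $j$-th coordinate on the compact invariant set $\SigmaI$ must satisfy $f_j(y^*)=0$, and strong competitiveness together with the sign of $f_j$ along the $j$-th axis (positive below $x^{(j)}_j$, nonpositive above --- this is where dissipativity, the repulsion of $\0$, and $\Sigma_{j'}=\{x^{(j)}\}$ enter) forces $M_j\le x^{(j)}_j$; the strictness $y\ne\xI$ then comes from the same vector-field computation $F_j(\xI)<0$ that the paper uses, but read through the backward flow and maximality of $M_j$ rather than through unorderedness. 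Your approach is more elementary in that it needs only compactness, invariance, the identification of the axial rest point, and one-dimensional dynamics, at the cost of a longer case analysis; the paper's is shorter but imports the unordered structure of carrying simplices. One small imprecision: the repeller property of $\0$ relative to $\Gamma_{j'}$ yields $f_j(ze_j)>0$ for small $z>0$ and hence $f_j(0)\ge 0$, not necessarily $f_j(0)>0$; this does not affect your argument, since all you actually use is that $f_j(ze_j)\le 0$ for $z\ge x^{(j)}_j$ on the axis.
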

\begin{proof}
Suppose to the contrary that there is $y\in\SigmaI$ not in
the $\lI$ relation to $\xI$.  Assume first that $y=\xI$,
that is, $\xI\in\SigmaI$.  For $i\in I'$, $j\in I'$,
$i\ne j$, we have $x^{[I]}_{j}>x^{(i)}_{j}=0$.  As
$f_{i}(x^{(i)})=0$, it follows by strong competitiveness
that $f_{i}(\xI)<0$ for $i\in I'$.  We have therefore
$F_{i}(\xI)=x^{[I]}_{i}f_{i}(\xI)<0$ for all $i\in I'$.
Consequently, $\phi_{t}\xI\llI\xI$ for $t>0$ sufficiently
small.  But $\SigmaI$ is invariant, so
$\phi_{t}\xI\in\SigmaI$ for all $t>0$.  We have thus
obtained two points in $\SigmaI$ related by $\llI$, which
contradicts Lemma~2.5 in Hirsch~\cite{Moe}. Assume that
$y\in\SigmaI$ is not in the $\leI$ relation to $\xI$.  Take
an index $k$ for which $y_{k}>x^{[I]}_{k}$.  Let $J\subset\N$
stand for the set of those indices $j$ for which $y_{j}=0$.
Evidently $k\in J'$ and
$I\subset J$. We have $y\in\SigmaI\cap
C_{J}^{\circ}=\Sigma\cap\CI\cap C_{J}^{\circ}=\Sigma\cap
C_{J}^{\circ}= \Sigma_{J}^{\circ}$.  As a consequence,
$y_{j}>x^{(k)}_{j}=0$ for $j\in J'$, $j\ne k$, and
$y_{k}>x^{[I]}_{k}=x^{(k)}_{k}$ (since $k\notin I$).  But
this means that $y\gg_{J}x^{(k)}$.  As both these points
are in $\Sigma_{J}$, this again is in contradiction to
Lemma~2.5 in~\cite{Moe}.
\end{proof}

We say (E) satisfies hypothesis (A) if
\par
{\em For each $1\le i\le n$ one has $f_{i}(x^{[i]})\ge0$.}
\par
In the light of the strong competitiveness, (A) can be
equivalently formulated as:
\par
{\em For each $I\subset\N$ with $1\le\card{I}\le n-1$ one
has $f_{i}(\xI)\ge0$ for $i\in I$.\/}

Hypothesis (A) is well known in the literature on
mathematical ecology.  Consider the Lotka--Volterra
competitive system
\begin{equation}
\label{Lotka-Volterra}
{\dot x}_{i}=x_{i}(b_{i}-\sum_{j=1}^{n}a_{ij}x_{j})
\end{equation}
with $b_{i}>0$, $a_{ij}>0$.  For~(\ref{Lotka-Volterra}) the
$i$\nobreakdash-\hspace{0pt}th axial rest point is given by
$x^{(i)}_{i}=b_{i}/a_{ii}$.  It is easy to see that (A)
is now equivalent to
\begin{equation*}
b_{i}\ge\sum_{\substack{j=1\\ j\ne i}}^{n}
a_{ij}\frac{b_{j}}{a_{jj}}
\quad\text{for each }1\le i\le n.
\end{equation*}

We are now in a position to state our main result.
\begin{TA}
Assume that \textup{(E)} satisfies \textup{(A)}.  Then the
carrying simplex $\Sigma$ is a $C^{1}$
submanifold-with-corners, neatly embedded in $C$.
\end{TA}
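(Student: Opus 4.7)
The plan is to exhibit on $\Sigma$ an invariant line bundle $L$ that is everywhere transverse to $\Sigma$ and along which the forward flow contracts strictly faster than in any tangent direction, and then to use this spectral gap to realize $\Sigma$ locally as a $C^{1}$ graph over each tangent hyperplane. On $\Sigmao\subset\Co$ the time-reverse derivative cocycle $\{D\phi_{-t}\}$ is strongly positive with respect to the cone $C$, so a fiberwise Perron--Frobenius argument gives at each $x\in\Sigmao$ a Perron ray $L_{x}\subset\Co$; coupling this with Oseledets' multiplicative ergodic theorem applied to every ergodic invariant Borel probability $\mu$ on $\Sigma$ produces, for $\mu$-a.e.\ $x$, a splitting $V=L_{x}\oplus T_{x}\Sigma$ in which $L_{x}$ carries the unique Oseledets exponent that is positive for $\{D\phi_{-t}\}$. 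The substantive task is to promote this $\mu$-a.e.\ splitting to a continuous one on all of $\Sigma$ (including $\dSigma$) with a uniform gap.

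\textbf{Facial induction and the role of (A).} Since every $\SigmaI$ is the carrying simplex of $(E)_{I}$ and $(E)_{I}$ again satisfies (A), I would induct on $k=n-1-\card{I}$, the base case $k=0$ being the axial rest points. For the inductive step I need to control the linearization at the boundary rest points of $\Sigma$. At any rest point $p\in\SigmaIo$ with $I\ne\emptyset$, the Jacobian $DF(p)$ is block-lower-triangular in the splitting $V=V_{I'}\oplus\VI$, and its diagonal $V_{I'}$-block is $\operatorname{diag}(f_{i}(p))_{i\in I}$. Lemma~\ref{unorder} yields $p\lI\xI$, and then strong competitiveness ($\partial f_{i}/\partial x_{j}<0$ for $i\ne j$) together with (A) gives $f_{i}(p)\ge f_{i}(\xI)\ge 0$ for every $i\in I$. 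So the transverse Lyapunov exponents of the forward flow at $p$ are nonnegative; averaging this sign information against any ergodic invariant measure supported on $\SigmaI$ via Birkhoff's theorem yields a uniform lower bound, which is exactly the spectral gap separating the Perron exponent of $\{D\phi_{-t}\}$ from the tangential exponents all along $\dSigma$.

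\textbf{Assembling the $C^{1}$ structure, and the main obstacle.} With the uniform spectral gap in hand, a Hadamard--Perron/graph-transform argument applied to the invariant fiberwise cone field $C\subset V$ in $TC=C\times V$ would extend $L$ to a continuous invariant line bundle on all of $\Sigma$ and present $\Sigma$ locally as a $C^{1}$ graph over any hyperplane transverse to $L$; the identities $\Sigma\cap\CI=\SigmaI$ together with the inductively established $C^{1}$ structures on the faces then give the manifold-with-corners structure and neatness of the embedding in $C$. The hardest step, in my view, is the jump from a $\mu$-a.e.\ splitting to a continuous one at every point of $\dSigma$: there the reverse derivative cocycle is no longer strongly positive, so Perron--Frobenius is unavailable pointwise and continuity must be recovered entirely from the uniform gap extracted from (A), together with the ergodic decomposition of invariant measures on $\Sigma$. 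This is precisely the place where the present approach trades the H\"older-regularity/Pesin-theoretic input used in \cite{JM} for the explicit sign condition (A).
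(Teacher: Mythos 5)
Your verification of the spectral condition is exactly the paper's new contribution: Lemma~\ref{unorder} gives $y\lI\xI$ for \emph{every} $y\in\SigmaI$, strong competitiveness and hypothesis (A) then give $f_{i}(y)>f_{i}(\xI)\ge0$ for $i\in I$, and since the $i$-th row of $DF$ on $\CI$ reduces to $f_{i}(x)$ times the $i$-th coordinate projection, the $i$-th external Lyapunov exponent of an ergodic $m$ is the time average of $f_{i}$ along $m$-typical orbits in $A_{I}$, hence strictly positive; this is Proposition~\ref{external-nonnegative}. Two corrections to your version of this step. First, the sign estimate must be made at every point of $A_{I}$ (or of $\SigmaI$), not only at rest points: Birkhoff averaging of sign information located at rest points does not control $\int f_{i}\,dm$ for an ergodic $m$ carried by a nontrivial invariant set. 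Fortunately Lemma~\ref{unorder} is stated for all of $\SigmaI$, so the fix is immediate. Second, no facial induction is needed for this part; the estimate is obtained on each face directly.

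The genuine gap is the step you yourself single out as the hardest: passing from the $m$-a.e.\ Oseledets splitting (for every $m\in\Merg(\Sigma)$) to a continuous invariant decomposition $\Sigma\times V=T\Sigma\oplus\CalS$ on all of $\Sigma$, including $\dSigma$ where the time-reversed flow is not strongly monotone and no pointwise Perron--Frobenius direction is available, and then running the graph transform to produce the $C^{1}$ submanifold-with-corners structure. You describe this as an obstacle to be overcome ``entirely from the uniform gap extracted from (A)'' but supply no argument, and it is not routine: it is precisely the content of Theorem~\ref{too-abstract-theorem}, which the present paper does not reprove but imports from \cite{JM}, where it occupies most of that paper. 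A correct write-up must either quote that result --- thereby reducing Theorem~A to Proposition~\ref{external-nonnegative}, which is what the paper does --- or actually carry out the Hadamard--Perron/cone-field construction at boundary points. As written, your proposal asserts the conclusion of the hard step rather than proving it.
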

For submanifolds-with-corners, their neat embeddings,
see~\cite{JM}.

We state now some consequences of hypothesis (A).  System
(E)$_{I}$ is called {\em permanent\/} if there is
$\epsilon>0$ such that
$\liminf_{t\to\infty}\rho(\phi_{t}x,\dCI)\ge\epsilon$ for
each $x\in\CIo$, where $\rho$ stands for the Euclidean
distance between a point and a set.

\begin{proposition}
\label{permanence}
If \textup{(A)} is satisfied then each of the subsystems
\textup{(E)$_{I}$} is permanent.
\end{proposition}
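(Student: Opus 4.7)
The plan is to apply the Hutson--Hofbauer average Lyapunov function criterion to the flow (E)$_I$ on its global attractor $\Gamma_I$. Take $P(x):=\prod_{j\in I'}x_j$, which vanishes precisely on $\dCI$ and satisfies $\dot P/P=\psi(x):=\sum_{j\in I'}f_j(x)$ on $\CIo$. A standard criterion then reduces permanence of (E)$_I$ to showing that $\int\psi\,d\mu>0$ for every ergodic invariant Borel probability measure $\mu$ of (E)$_I$ with $\supp\mu\subset\Gamma_I\cap\dCI$.

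By ergodicity there is a unique $J\supseteq I$ such that, for $\mu$-almost every $x$, $x_k=0$ if and only if $k\in J$. If $J=\N$ then $\mu=\delta_{0}$ and $\int\psi\,d\mu=\sum_{j\in I'}f_j(0)>0$, since the standing assumption that $\0$ is a repeller relative to $\Gamma$ forces the eigenvalues $f_j(0)$ of the linearisation at $0$ to be strictly positive. Otherwise $J\supsetneq I$; since $\Sigma_J$ is the attractor in $\Gamma_J$ complementary to $\0$ (Proposition~\ref{projection} applied to (E)$_J$), the $\omega$-limit of $\mu$-almost every $x$ lies in $\Sigma_J$, and combined with the standard identity $\supp\mu=\omega(x)$ for generic $x$ in the support of an ergodic measure, this forces $\supp\mu\subset\Sigma_J$.

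I would then split $\int\psi\,d\mu$ along $I'=J'\sqcup(J\setminus I)$, where $J':=\N\setminus J$. For $j\in J'$ the orbit identity $\tfrac{d}{dt}\log x_j(t)=f_j(x(t))$, combined with boundedness of $x_j$ on $\Gamma_I$ and Poincar\'e recurrence (applicable since $\mu$-almost every $x$ has $x_j(0)>0$), yields $\int f_j\,d\mu=0$: a strictly positive value would force $x_j(t)\to\infty$, and a strictly negative one would force $x_j(t)\to0$, contradicting recurrence. For $i\in J\setminus I$ (nonempty since $J\supsetneq I$) the pointwise claim $f_i>0$ on $\Sigma_J$ suffices: by Lemma~\ref{unorder} applied to $J$, every $y\in\Sigma_J$ has $y\leq x^{[J]}$ componentwise with strict inequality in some $k\in J'$, so since $\partial f_i/\partial x_k<0$ for all $k\in J'$ (as $i\in J$ forces $i\neq k$) and $f_i(x^{[J]})\geq0$ by (A), one gets $f_i(y)>f_i(x^{[J]})\geq0$. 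Compactness of $\Sigma_J$ upgrades this to $f_i\geq\delta>0$ uniformly, giving $\int f_i\,d\mu>0$ and hence $\int\psi\,d\mu>0$.

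The main obstacle is the exceptional case $\card{J}=n-1$, which lies outside the range of Lemma~\ref{unorder}. Then $\Sigma_J$ is the single axial rest point $x^{(j)}$ (with $J'=\{j\}$) and $x^{[J]}=x^{(j)}$, so the strictness step above degenerates. Instead I would compare $x^{(j)}$ with $x^{[i]}$ directly for each $i\neq j$: componentwise $x^{(j)}\leq x^{[i]}$, with $0=x^{(j)}_k<x^{(k)}_k=x^{[i]}_k$ for every $k\neq i,j$ (and such $k$ exists because $n\geq3$). Strong competitiveness gives $f_i(x^{(j)})>f_i(x^{[i]})$, and (A) supplies $f_i(x^{[i]})\geq0$, whence $f_i(x^{(j)})>0$ and the pointwise estimate is restored in this case.
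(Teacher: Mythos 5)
Your proof reaches the right conclusion by a genuinely different route from the paper's. The paper argues directly and locally, with no ergodic theory: by Lemma~\ref{unorder}, strong competitiveness and (A) --- the same key computation you perform --- one gets $f_i>0$ on the face $\Sigma_{\{i\}}$ of $\Sigma$ on which $x_i=0$, hence on a neighborhood $U_i$ of that face; since every orbit in $\CIo$ eventually enters an absorbing neighborhood $U$ of $\Sigma$ chosen so that $\{x\in U:x_i<\epsilon_i\}\subset U_i$, the coordinate $(\phi_tx)_i$ is strictly increasing whenever it lies below $\epsilon_i$, which gives the uniform lower bound directly. You instead run the Hofbauer--Hutson average Lyapunov function machinery, which requires the additional facts that boundary ergodic measures are carried by the faces $\Sigma_J$, that $\int f_j\,d\mu=0$ for the surviving indices $j\in J'$ (your recurrence argument is correct), and that $\int f_i\,d\mu>0$ for $i\in J\setminus I$; your separate treatment of the case $\card{J}=n-1$, which falls outside the range of Lemma~\ref{unorder}, is exactly the right patch. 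Your approach isolates the invasion-rate condition that is the structural reason for permanence, and it anticipates the paper's Lemma~\ref{carrier}; the cost is importing the permanence criterion and its ergodic-measure reformulation, neither of which the paper needs.

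One step should not be waved through: the measure $\delta_{0}$. You assert that the standing hypothesis that $\0$ is a repeller relative to $\Gamma$ forces $f_j(0)>0$ for every $j$. It does not: that hypothesis is purely dynamical, and a degenerate repeller with $f_j(0)=0$ but $f_j$ positive on the open $j$-th axis is compatible with every assumption in the paper; the repeller property only yields $f_j(0)\ge 0$. In that degenerate situation $\int\psi\,d\delta_{0}=\sum_{j\in I'}f_j(0)$ could vanish and your criterion would be inconclusive at $\delta_{0}$, and no reweighting $P=\prod_j x_j^{\theta_j}$ repairs it. The gap is fixable --- for instance, use the repeller property of $\0$ directly to conclude that no orbit of $\CIo$ accumulates on $0$, and invoke the permanence criterion in its Morse-decomposition (or boundary-attractor) form so that the invasion condition need only be verified for ergodic measures carried by $\dSigmaI$, where your argument applies --- but as written this case is not justified by the paper's hypotheses.
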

\begin{proof}
In order not to encumber our presentation with too many
subscripts, we prove the assertion for $I=\emptyset$, that
is, for system (E) only.  For each $i$, $1\le i\le n$, we
have as a result of strong competitiveness and
Lemma~\ref{unorder} that $f_{i}(x)>0$ for all
$x\in\Sigma_{i'}$.  Take now a neighborhood $U_{i}$ of
$\Sigma_{i'}$ in $C$ of the form
\begin{equation*}
U_{i}=\{(x_{1},\dots,x_{n}):0\le x_{i}<\epsilon_{i},
(x_{1},\dots,x_{i-1},x_{i+1},\dots,x_{n})\in\tilde{U}_{i}\},
\end{equation*}
where $\epsilon_{i}>0$ and a relative neighborhood
$\tilde{U}_{i}$ of $\Sigma_{i'}$ in $C_{i'}$ are so small that
$f_{i}(x)>0$ for all $x\in U_{i}$.  As $\Gamma$ is the global
attractor for (E) and $\Sigma$ is the attractor relative to
$\Gamma$ complementary to $\0$, there exists a forward
invariant neighborhood $U$ of $\Sigma$ in $C$ with the
property that $\phi_{t}x\in U$ for $x\in\Cplus$ and
sufficiently large $t$.  Also, $U$ can be taken so small
that all the sets $\{x\in U:x_{i}<\epsilon_{i}\}$ are
contained in $U_{i}$.  Now observe that for $t$ so large
that $\phi_{t}x$ belongs to $U$ one has
\begin{equation*}
\frac{d(\phi_{t}x)_{i}}{dt}=F_{i}(\phi_{t}x)=
x_{i}f_{i}(\phi_{t}x)>0
\end{equation*}
as long as $(\phi_{t}x)_{i}<\epsilon_{i}$.  From this it
readily follows that
$\liminf_{t\to\infty}\rho(\phi_{t}x,\Sigma_{i'})
\ge\epsilon_{i}$ for any $x\in\Co$.
\end{proof}

In view of results on attractors contained in Hale~\cite{Hale}
we have the following.
\begin{lemma}
\label{prop-of-perm}
Under the assumptions of Proposition~\ref{permanence}, for
each $I\subset\N$ the invariant compact set $\dSigmaI$ is a
repeller relative to $\SigmaI$.
\end{lemma}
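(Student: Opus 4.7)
The plan is to use Proposition~\ref{permanence} to produce a compact attractor $A_I\subset\SigmaIo$ relative to $\SigmaI$, and then identify $\dSigmaI$ as its complementary repeller via the standard attractor/repeller correspondence in \cite{Hale}. The degenerate cases $I=\N$ (where $\SigmaI=\emptyset$, since $\0\notin\Sigma$) and $\card{I}=n-1$ (where $\dSigmaI=\emptyset$) are vacuous, so I assume $\card{I}\le n-2$.

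Permanence of (E)$_I$ furnishes some $\epsilon_I>0$ such that every forward orbit issuing from $\CIo$ eventually remains in the compact set $K_{\epsilon_I}:=\{x\in\Gamma_I:\rho(x,\dCI)\ge\epsilon_I\}$, with boundedness supplied by dissipativity of (E)$_I$. Hale's global-attractor theorem applied to (E)$_I$ on $\CIo$ with $K_{\epsilon_I}$ as absorbing set then yields a compact invariant set $A_I\subset K_{\epsilon_I}\subset\CIo$ that attracts every bounded subset of $\CIo$. Since $A_I$ is invariant and stays uniformly at distance $\ge\epsilon_I$ from $\0\in\dCI$, every $\alpha$-limit of a point of $A_I$ avoids $\0$; by the definition of $\Sigma$ as the attractor complementary to the repeller $\0$ in $\Gamma$, this places $A_I$ inside $\Sigma$, hence inside $\SigmaIo$.

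To finish, I would verify that $A_I$ is an attractor relative to $\SigmaI$ and that its complementary repeller is exactly $\dSigmaI$. The open set $\SigmaIo=\SigmaI\setminus\dSigmaI$ is a relative neighborhood of $A_I$ in $\SigmaI$, and for every $x\in\SigmaIo\subset\CIo$ one has $\omega(x)\subset A_I$; hence $\omega(\SigmaIo)=A_I$, so $A_I$ is an attractor relative to $\SigmaI$. The complementary repeller is $R:=\{y\in\SigmaI:\omega(y)\cap A_I=\emptyset\}$. Closedness and invariance of $\dSigmaI$ give $\omega(y)\subset\dSigmaI$, disjoint from $A_I\subset\SigmaIo$, and hence $y\in R$ for every $y\in\dSigmaI$; conversely $y\in\SigmaIo$ forces $\omega(y)\subset A_I$ and hence $y\notin R$. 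Thus $R=\dSigmaI$, and $\dSigmaI$ is a repeller relative to $\SigmaI$.

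The only mildly delicate point is ensuring $A_I\subset\Sigma$ (not merely $A_I\subset\Gamma_I$); this is precisely what the uniform permanence bound $\rho(A_I,\dCI)\ge\epsilon_I$ delivers, by keeping the orbits inside $A_I$ bounded away from the repeller $\0$.
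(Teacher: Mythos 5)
Your overall strategy---permanence plus dissipativity produces a compact global attractor $A_I$ for the restricted semiflow on $\CIo$, this attractor lands in $\SigmaIo$, and $\dSigmaI$ is then identified as its complementary repeller---is exactly the argument the paper intends (the paper offers no proof beyond the citation of Hale's results on attractors). Most of your steps are sound: the identification of the dual repeller $\{y\in\SigmaI:\omega(y)\cap A_I=\emptyset\}$ with $\dSigmaI$, and the argument that $A_I\subset\Sigma$ because $\alpha(x)\subset A_I$ avoids $\0$ for every $x$ in the compact invariant set $A_I$, are both correct.

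However, the step where you verify that $A_I$ is an attractor relative to $\SigmaI$ fails as written. You take $U=\SigmaIo$ and claim $\omega(\SigmaIo)=A_I$. But $\SigmaIo$ is invariant under the flow, so $\phi_t(\SigmaIo)=\SigmaIo$ for every $t$, and hence $\omega(\SigmaIo)=\overline{\SigmaIo}=\SigmaI$, which is strictly larger than $A_I$ in general: the $\omega$-limit set of a \emph{set} in Hale's sense is $\bigcap_{s\ge0}\overline{\bigcup_{t\ge s}\phi_t(U)}$, not the union of the $\omega$-limit sets of its points, and the definition of an attractor requires the former. The repair is standard: take instead $U=\{x\in\SigmaI:\rho(x,\dCI)>\epsilon_I/2\}$, a relative neighborhood of $A_I$ in $\SigmaI$ whose closure is a compact subset of $\CIo$; since the global attractor of the semiflow on $\CIo$ attracts compact subsets of $\CIo$ uniformly, one gets $\omega(U)=A_I$ as required. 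A similar, more minor, slip occurs earlier: orbits need not reach $\Gamma_{I}$ in finite time, so your absorbing set should be a compact neighborhood in $\CIo$ of $\{x\in\Gamma_{I}:\rho(x,\dCI)\ge\epsilon_I\}$ rather than a subset of $\Gamma_{I}$ itself.
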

For $I\subset\N$ denote by $A_{I}$ the attractor (relative
to $\SigmaI$) complementary to $\dSigmaI$.  As $A_{I}$ can
be viewed as the global attractor for the semiflow
$\{\phi_{t}\}_{t\ge0}$ restricted to the connected metric
space $\SigmaIo$, a result of Gobbino and Sardella
(Thm.~3.1 in~\cite{GS}) yields that $A_{I}$ is connected.

The ecological interpretation of the property described in
Proposition~\ref{permanence} is as follows.  In each
subcommunity none of the species goes extinct, and invasion
of a proper subcommunity by others causes the populations
of the previously present species to shrink due to the
larger amount of competition.

Before formulating sufficient conditions for $\Sigma$ to be
of class $C^{k+1}$ we need to introduce some notation (we
follow \Benaim's paper~\cite{Michel}).  For $x\in A_{I}$,
$I\subset\N$ with $\card{I}\le n-2$, we denote by
$\lambda(x)$ the largest eigenvalue of the symmetrization
of the matrix $(-DF^{I}(x))$, where $DF^{I}:=[\partial
F_{i}/\partial x_{j}]_{(i,j)\in I'\times I'}$.  Further,
$d(x)$ stands for the square root of
\begin{equation*}
\min_{\substack{i\ne j\\ i,j\notin I}}
\frac{{\partial}F_{i}}{{\partial}x_{j}}(x)
\frac{{\partial}F_{j}}{{\partial}x_{i}}(x).
\end{equation*}
Put $\lambda_{I}:=\sup\{\lambda(x):x\in A_{I}\}$
and $d_{I}:=\inf\{d(x):x\in A_{I}\}$.

We say that (E) satisfying (A) fulfills (C) if for each $I$
with $0\le\card{I}\le n-2$ any one of the conditions (C1)
or (C2) holds:
\par (C1) $k\sup\{\norm{DF^{I}(x)}:x\in A_{I}\}<2(k+1)d_{I}$.
\par (C2) $k\lambda_{I}<2(k+1)d_{I}$.

\begin{TB}
Assume that a $C^{k+1}$ system \textup{(E)} satisfies
\textup{(A)} and \textup{(C)}. Then the carrying simplex
$\Sigma$ is a $C^{k+1}$ submanifold-with-corners.
\end{TB}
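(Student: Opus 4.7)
The plan is to bootstrap the $C^{1}$ conclusion of Theorem~A face by face, invoking Bena\"im's smoothness theorem~\cite{Michel} for invariant hypersurfaces of strongly monotone dynamical systems on each face of $\Sigma$. Theorem~A already supplies the combinatorial manifold-with-corners structure on $\Sigma$ together with $C^{1}$ charts, so the task reduces to upgrading the order of differentiability on each face from $1$ to $k+1$.

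I would argue by downward induction on $\card{I}$, the base case $\card{I}=n-1$ being trivial as $\Sigma_{i'}$ is a single rest point. For the inductive step fix $I$ with $\card{I}\le n-2$. The face $\SigmaI$ is the carrying simplex for $(E)_{I}$; this subsystem inherits (A), so by Proposition~\ref{permanence} it is permanent, and by Lemma~\ref{prop-of-perm} the attractor $A_{I}\subset\SigmaIo$ complementary to $\dSigmaI$ lies in the open orthant $\CIo$. On $\CIo$ the time-reversed local flow is strongly monotone with strongly positive linearized cocycle, which is the setting of~\cite{Michel}. Bena\"im's theorem then delivers $C^{k+1}$ smoothness of the invariant hypersurface $\SigmaIo$ in $\CIo$ under a gap condition of the shape $k\mu<2(k+1)\nu$, where $\mu$ majorizes the tangential expansion rate of the reversed derivative cocycle on $A_{I}$ (the set carrying all the relevant asymptotic dynamics) and $\nu$ minorizes the transverse contraction rate forced by strong positivity. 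Hypothesis~(C) is exactly this inequality in its two standard realizations: (C1) takes the coarse operator-norm bound $\mu\le\sup\{\norm{DF^{I}(x)}:x\in A_{I}\}$, while (C2) takes the sharper symmetrized-eigenvalue bound $\mu\le\lambda_{I}$; in either case $\nu$ is controlled from below by $d_{I}$, the geometric-mean quantity extracted from the mirrored off-diagonal entries of $DF^{I}$ responsible for strong positivity.

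It remains to patch the interior $C^{k+1}$ atlas on each $\SigmaIo$ with the inductively $C^{k+1}$ lower-dimensional faces of $\dSigmaI$ into a neat $C^{k+1}$ embedding of $\SigmaI$, and ultimately of $\Sigma$, into $C$. This is where the main obstacle lies, since Bena\"im's strong-positivity framework degenerates on $\dC$. I would handle it as in~\cite{JM}: standing assumption~2 together with (A) and Lemma~\ref{unorder} forces each axial rest point of the subsystem $(E)_{I}$ to be hyperbolic, so its $C^{k+1}$ local unstable manifold (which exists by classical hyperbolic theory for a $C^{k+1}$ vector field) supplies a $C^{k+1}$ corner chart tangent to $\SigmaI$; these corner charts glue to the interior atlas on $\SigmaIo$ via the flow and to the inductive $C^{k+1}$ structure on the adjacent lower faces, producing a $C^{k+1}$ atlas on $\Sigma$ compatible with the $C^{1}$ neat embedding of Theorem~A.
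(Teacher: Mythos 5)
Your overall strategy --- upgrade the $C^{1}$ structure from Theorem~A by feeding the bunching condition (C) into Bena\"im's machinery --- is the right instinct, but the face-by-face induction has a genuine gap exactly at the step you flag as ``the main obstacle'', and the paper's proof is organized precisely so that this step never arises. Two problems. First, the interior step already overreaches: under (A) every subsystem is permanent (Proposition~\ref{permanence}), so the attractor $A_{I}$ has all of $\SigmaIo$ as its basin, and the strongly monotone time-reversed framework of \cite{Michel} on the non-compact set $\CIo$ degenerates at $\dCI$ --- this is the very obstruction spelled out in the Introduction for \Terescak's theorem, and it applies verbatim to a direct invocation of Bena\"im's hypersurface theorem on an open face. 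Second, and more seriously, your corner charts come only from hyperbolic axial rest points, but the boundary $\dSigmaI$ carries the recurrent sets $A_{J}$ for $J\supsetneq I$, which in general are not rest points; near a non-stationary point of $\dSigmaI$ the local unstable manifold of an axial equilibrium tells you nothing, and you offer no mechanism for matching the $(k+1)$-jets of the interior atlas with those of the adjacent lower faces along such points. Gluing $C^{k+1}$ structures across a codimension-one stratum is not automatic even when both sides are separately $C^{k+1}$.

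The paper instead works globally: Theorem~\ref{too-abstract-theorem}(\ref{t-a-2}) provides an invariant splitting $\Sigma\times V=T\Sigma\oplus\CalS$ over \emph{all} of $\Sigma$, corners included; the concluding Proposition of Section~\ref{higher-order} converts (C), via Sections~3 and~4 of \cite{Michel} and Lemma~\ref{carrier}, into the uniform ergodic-measure gap $\Lambda_{1}(m)-(k+1)\Lambda_{2}(m)<-\eta$, where the nonnegativity of the external Lyapunov exponents (Proposition~\ref{external-nonnegative}) is exactly what rescues the gap in the case $\lambdamin<\Lambda^{*}_{2}(m)$, i.e.\ when an external exponent interleaves below the second internal one --- a case your argument never addresses because you only ever compare internal rates on each $A_{I}$; Schreiber's result \cite{Sch} and the $C^{k+1}$ section theorem of \cite{HPS} are then applied once to the whole bundle over $\Sigma$, so no patching is ever needed. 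To salvage your induction you would have to supply a normally hyperbolic gluing argument along each $\dSigmaI$, which in effect amounts to reproving the global section-theorem step.
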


\section{Proof of Theorem A}
\label{Ce-one}
Put $\sphere$ to be the
$(n-1)$\nobreakdash-\hspace{0pt}dimensional sphere $\{v\in
V:\norm{v}=1\}$.  For a vector subspace $W$ of $V$ and
$0\le k\le\dim{W}$, the symbol $\mathbb{G}_{k}W$ denotes
the compact metrizable space of all
$k$\nobreakdash-\hspace{0pt}dimensional vector subspaces of
$W$, endowed with the standard topology: for any two
$Z_{1}$, $Z_{2}\in\mathbb{G}_{k}W$, their distance is
defined as the Hausdorff distance between
$Z_{1}\cap\sphere$ and $Z_{2}\cap\sphere$.

The linearization of (E) generates on $TC$ a linear
skew-product (local) flow $(\phi_{t}x,D\phi_{t}(x)v)$,
where $D\phi_{t_{0}}(x)v_{0}$ is the value at time $t_{0}$
of the solution of the variational equation
$\dot\xi=DF(\phi_{t}x)\xi$ with initial condition
$\xi(0)=v_{0}$.

For a linear subset $\CalC$ of the product bundle $B\times W$,
where $B\subset\Sigma$ and $W$ is a vector subspace of $V$,
we will denote by $\CalC_{x}$ the set of all those $v\in W$
such that $(x,v)\in\CalC$ (in other words,
$\x\times\CalC_{x}$ is the fiber of $\CalC$ over $x$).  A
linear subset $\CalC$ of $B\times W$ is called {\em
invariant\/} if for each $(x,v)\in\CalC$ and each
$t\in\reals$ one has $(\phi_{t}x,D\phi_{t}(x)v)\in\CalC$.

Denote the set of all ergodic measures supported on a
compact invariant $B\subset\Sigma$ by $\Merg(B)$.  The
multiplicative ergodic theorem of Oseledets (see e.g.~\Mane\
\cite{Mane}) assures us that if $B\times W$ is an invariant
bundle then for each $m\in\Merg(B)$ there exist an
invariant $m$\nobreakdash-\hspace{0pt}measurable set
$\Breg\subset B$ (the set of {\em regular points\/}), a
collection $\CalC_{1},\dots,\CalC_{l}$ of invariant linear
subsets given by $m$\nobreakdash-\hspace{0pt}measurable
maps
$\Breg\ni x\mapsto(\CalC_{k})_{x}\in\mathbb{G}_{d_{k}}W$
(the {\em Oseledets decomposition\/}) and a collection
$\Lambda_{1}<\dots<\Lambda_{l}$ of reals ({\em Lyapunov
exponents\/}) such that
\begin{enumerate}
\item $W=\bigoplus_{k=1}^{l}(\CalC_{k})_{x}$ for
$x\in\Breg$,
\item
\begin{equation*}
\lim_{t\to\pm\infty}\frac{\log\norm{D\phi_{t}(x)v}}{t}=
\Lambda_{k}
\end{equation*}
for $1\le k\le l$, $x\in\Breg$ and $v\in(\CalC_{k})_{x}$.
\end{enumerate}

\begin{lemma}
\label{carrier}
For each $m\in\Merg(\Sigma)$ there is
$I=I(m)\subsetneq\N$ such that the support $\supp{m}$
of $m$ is contained in $A_{I}$.
\end{lemma}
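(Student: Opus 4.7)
The plan is to combine two ingredients: an ergodicity/$0$--$1$ dichotomy that selects a canonical face of $\Sigma$ carrying $m$, and the permanence result (Lemma~\ref{prop-of-perm}) which forces $\supp m$ into the attractor $A_I$ of that face.

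First, I would observe that $\{x \in \Sigma : x_i = 0\} = \Sigma \cap C_{\{i\}}$ is closed and invariant, so ergodicity yields $m(\{x_i = 0\}) \in \{0,1\}$ for every $i \in \N$. Setting
\begin{equation*}
I := \{i \in \N : m(\{x \in \Sigma : x_i = 0\}) = 1\},
\end{equation*}
and intersecting the $m$-full-measure sets $\{x_i = 0\}$ for $i \in I$ with the $m$-full-measure sets $\{x_j > 0\}$ for $j \in I'$, I obtain $m(\SigmaIo) = 1$. Moreover $I \subsetneq \N$: otherwise $m$ would be supported on $\bigcap_{i \in \N}\{x_i = 0\} = \0$, contradicting $\0 \notin \Sigma$.

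Second, I would invoke Lemma~\ref{prop-of-perm}: $\dSigmaI$ is a repeller relative to $\SigmaI$, so the defining attractor/repeller property gives $\omega(x) \subset A_I$ for every $x \in \SigmaI \setminus \dSigmaI = \SigmaIo$. Since $m(\SigmaIo) = 1$, the Birkhoff ergodic theorem, applied to a countable base of open sets meeting $\supp m$, gives $\omega(x) = \supp m$ for $m$-a.e.\ $x$; combined with the previous sentence this yields $\supp m \subset A_I$, as required.

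The point most worth double-checking is the upgrade from $m(\SigmaI) = 1$ to $m(\SigmaIo) = 1$: ensuring that $m$ really does sit in the relative interior of the chosen face (and not merely on the closed face $\SigmaI$) is exactly what allows the permanence/attractor machinery of Lemma~\ref{prop-of-perm} to apply and pin the support inside $A_I$. The rest is a routine combination of the $0$--$1$ law and the ergodic theorem.
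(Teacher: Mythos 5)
Your proof is correct and follows essentially the same route as the paper: ergodicity pins $m$ on a single $\SigmaIo$ with $I\subsetneq\N$, and the attractor--repeller structure of the pair $(\dSigmaI,A_{I})$ in $\SigmaI$ then forces $\supp{m}\subset A_{I}$. The only difference is cosmetic and lies in the last step: the paper notes that points of $\SigmaIo\setminus A_{I}$ are wandering relative to $\Sigma$ and hence form an $m$-null set, whereas you reach the same conclusion via Birkhoff generic points and the inclusion $\omega(x)\subset A_{I}$ --- both are standard recurrence arguments resting on the same dynamical fact.
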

\begin{proof}
By ergodicity of $m$ and invariance of all $\SigmaIo$,
there is precisely one $I\subset\N$ such that
$m(\SigmaIo)=1$ and $m(\dSigmaI)=0$.  Further, as
points from $\SigmaIo\setminus A_{I}$ are wandering
(relative to $\Sigma$), one has $m(\SigmaIo\setminus
A_{I})=0$.
\end{proof}
Fix $m\in\Merg(\SigmaI)$ with $m(\SigmaIo)=1$, and put
$\CalB:=\SigmaI\times\VI$, $\CalBi:=\SigmaI\times\VIi$,
$i\in I$.  Evidently, $\CalB$ is a subbundle of $\CalBi$ of
codimension one.  From the structure of system (E) it follows
that the bundles $\CalB$, $\CalBi$ are invariant.  Denote
by $\Lambda_{1}<\Lambda_{2}\dots<\Lambda_{l}$ the Lyapunov
exponents on $\CalB$ for the ergodic measure $m$ (we will
call them the {\em internal Lyapunov exponents\/} for
$m$).  Among the Lyapunov exponents on $\CalBi$ there is
one (denoted by $\lambda^{(i)}(m)$) corresponding to the
measurable linear set $\CalC^{(i)}_{k}\subset\CalBi$
such that $(\CalC^{(i)}_{k})_{x}\subsetneq\VI$ for
$m$\nobreakdash-\hspace{0pt}\pw\ $x\in\SigmaIo$.  We will
refer to $\lambda^{(i)}(m)$ as the
$i$\nobreakdash-\hspace{0pt}th {\em external Lyapunov
exponent\/} for $m$ (this terminology is modeled on
Hofbauer's~\cite{Hof}).

The following result was essentially proved in the author's
paper~\cite{JM} (except for terminology).
\begin{theorem}
\label{too-abstract-theorem}
Assume that for each $m\in\Merg(\dSigma)$ all its
external Lyapunov exponents are nonnegative.  Then the
following holds:
\begin{enumerate}
\item \label{t-a-1}
The carrying simplex $\Sigma$ is a $C^{1}$
submanifold-with-corners neatly embedded in $C$.
\item \label{t-a-2}
There are $\mu>0$ and an invariant one-dimensional
subbundle $\CalS$ of $\Sigma\times V$ such that for each
$m\in\Merg(\Sigma)$ one has
\begin{enumerate}
\item $\Sigma\times V=T\Sigma\oplus\CalS$, where $T\Sigma$
denotes the tangent bundle of $\Sigma$, and
$(\CalC_{1})_{x}\subset\CalS$ for
$m$\nobreakdash-\hspace{0pt}\pw\ $x\in\Sigma$.
\item $\Lambda_{1}$ is internal.
\item $\Lambda_{1}\le-\mu$.
\end{enumerate}
\end{enumerate}
\end{theorem}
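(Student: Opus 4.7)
The plan is to establish the Oseledets structure of part~(2) first, and deduce the $C^{1}$ smoothness of part~(1) from it. Given $m\in\Merg(\Sigma)$, Lemma~\ref{carrier} furnishes $I=I(m)$ with $\supp m\subset A_{I}\subset\SigmaIo$. On the internal bundle $\CalB=\SigmaI\times\VI$ restricted to $\supp m$, the time-reverse linearization $D\phi_{-t}$ is a cocycle of operators mapping the nonnegative cone $\VI\cap\CI$ strongly into its interior; this is the derivative-level manifestation of strong monotonicity of $\{\phi_{-t}\}$ on $\CIo$, and is available because permanence (Proposition~\ref{permanence}) forces $\supp m\subset\CIo$. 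Combining Oseledets' theorem with a Perron--Krein--Rutman argument for strongly positive cocycles (along the lines of~\cite{JM}) shows the top Lyapunov exponent of this reverse-time cocycle is simple, with one-dimensional Oseledets fiber spanned by a strongly positive vector. Translated back to forward time, the smallest internal exponent $\Lambda_{1}(m)$ is simple and $(\CalC_{1})_{x}$ is a line of strongly positive vectors for $m$-\pw\ $x$.

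Next I would verify that $\Lambda_{1}(m)$ sits strictly below every other Lyapunov exponent of the full bundle and is itself strictly negative. For $m$ concentrated on $\dSigma$, the hypothesis that external exponents are nonnegative places them above any negative internal exponent, while strict negativity of $\Lambda_{1}(m)$ reflects that $\SigmaI$ is the carrying simplex for (E)$_{I}$: the Oseledets direction is transverse to $\SigmaI$ inside $\CI$, and attraction of $\CIo$ to $\SigmaI$ forces exponential contraction along it. For $m$ supported in $\Sigma^{\circ}$ only internal exponents exist, so the conclusion is direct. Upper semicontinuity of $\Lambda_{1}$ in $m$ together with compactness of $\Merg(\Sigma)$ then yields $\mu>0$ with $\Lambda_{1}(m)\le-\mu$ uniformly, giving (2)(c). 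The uniform spectral gap between $\Lambda_{1}$ and the remaining exponents permits, via dominated-splitting theory (\Mane~\cite{Mane}), assembly of the pointwise fibers $(\CalC_{1})_{x}$ into a continuous invariant line subbundle $\CalS\subset\Sigma\times V$, complementary to a continuous invariant subbundle that contains every non-$\Lambda_{1}$ Oseledets fiber and is the candidate for $T\Sigma$.

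For part~(1), the existence of $\CalS$ as a continuous, uniformly dominated, strongly positive line subbundle lets one build a strong stable foliation transverse to the candidate tangent distribution. A graph-transform argument (as in~\cite{JM}) then constructs, near each $x\in\Sigma$, a locally invariant $C^{1}$ graph tangent to the complement of $\CalS_{x}$; uniqueness, combined with the radial homeomorphism of $\Sigma$ onto the standard simplex (Proposition~\ref{projection}), forces this graph to coincide with $\Sigma$, giving the claimed $C^{1}$ structure. Neatness of the embedding follows by induction on $\card I$, applying the same argument to each subsystem (E)$_{I}$ to conclude that $\SigmaI$ is a $C^{1}$ submanifold of $\CI$ meeting the higher-dimensional faces transversally. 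The main obstacle I anticipate is carrying out the graph-transform step with only $C^{1}$ regularity of $f$, since classical Pesin theory requires $C^{1+\alpha}$; the saving feature is that $\CalS$ is one-dimensional and cone-positive, so the sharper estimates of~\cite{JM} can replace full Pesin theory.
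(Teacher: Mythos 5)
You should first note that the paper itself contains no proof of Theorem \ref{too-abstract-theorem}: it is imported from \cite{JM} (``essentially proved\dots except for terminology''), so there is no in-text argument to compare against. Your outline does follow the same broad strategy as \cite{JM} --- strong positivity of the time-reversed derivative cocycle on the internal bundle, a Perron/Krein--Rutman simplicity statement for the principal Oseledets direction, a spectral gap, and an invariant-section/graph-transform step --- but two of your steps are assertions rather than arguments, and they are exactly the steps where the real work lies.

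First, the strict negativity of $\Lambda_{1}(m)$. You justify it by saying that ``attraction of $\CIo$ to $\SigmaI$ forces exponential contraction along it.'' Topological attraction of an invariant set does not imply negative normal Lyapunov exponents for ergodic measures carried on it: for $\dot r=-(r-1)^{3}$, $\dot\theta=1$ the circle $r=1$ attracts a neighborhood but the normal exponent of the invariant measure on it is $0$. The negativity here is a genuine theorem that has to exploit the competitive structure --- for instance that $\int f_{j}\,dm=0$ for $j\in I'$ (since $\log x_{j}$ is bounded on $\supp m\subset A_{I}\subset\SigmaIo$ and $f_{j}$ is its orbital derivative), together with the strictly negative off-diagonal terms $x_{i}\,\partial f_{i}/\partial x_{j}$ on $A_{I}$ and standing assumption 2 at rest points --- and you have simply restated the conclusion. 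Second, the uniformity $\Lambda_{1}(m)\le-\mu$. You invoke ``upper semicontinuity of $\Lambda_{1}$ in $m$ together with compactness of $\Merg(\Sigma)$,'' but neither holds: the set of ergodic measures is not weak\textsuperscript{*}-compact in general, and the smallest Lyapunov exponent is \emph{lower}, not upper, semicontinuous (it is $-1$ times an infimum of continuous functions of $m$ formed from the inverse cocycle), so its supremum over invariant measures need not be attained, let alone be negative. The standard repair is to prove first, from the cone-contraction property alone, that the principal direction field $\CalS$ is continuous on all of $\Sigma$; then $\Lambda_{1}(m)$ becomes the integral against $m$ of a continuous infinitesimal growth rate and one can work on the compact convex set of all invariant measures. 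But that order of argument makes your subsequent appeal to dominated-splitting theory circular: continuity of $\CalS$ must come from strong positivity \emph{before} the uniform gap, not be deduced from it. Your closing worry about running the graph transform with only $C^{1}$ data is well placed and unresolved in the proposal; avoiding the $C^{1+\alpha}$ Pesin machinery of \cite{JM} is precisely what the hypothesis on external exponents is for, and ``the sharper estimates of \cite{JM}'' is not yet an argument.
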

In the present section we make use of part~1 of
Theorem~\ref{too-abstract-theorem} only.

In view of the above result, we need to prove only the
following.
\begin{proposition}
\label{external-nonnegative}
Under the assumptions of Theorem~A, for each
$m\in\Merg(\dSigma)$ all its external Lyapunov exponents
are nonnegative.
\end{proposition}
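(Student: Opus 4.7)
The plan is to identify each external Lyapunov exponent as an integral of a component function $f_i$ against the ergodic measure, and then use hypothesis (A) together with Lemma~\ref{unorder} and strong competitiveness to show that $f_i$ is nonnegative on the relevant face of $\Sigma$.

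Fix $m\in\Merg(\dSigma)$. By Lemma~\ref{carrier} there is a nonempty $I\subsetneq\N$ with $\supp m\subset A_{I}\subset\SigmaIo$, so $m\in\Merg(\SigmaI)$ with $m(\SigmaIo)=1$. Pick $i\in I$; I would first analyze $DF(x)$ at a point $x\in\SigmaIo$. Since $F_{k}=x_{k}f_{k}$, the entries of the $i$-th row of $DF(x)$ are
\begin{equation*}
\frac{\partial F_{i}}{\partial x_{j}}(x)=\delta_{ij}f_{i}(x)+x_{i}\frac{\partial f_{i}}{\partial x_{j}}(x)=\delta_{ij}f_{i}(x),
\end{equation*}
because $x_{i}=0$ for $i\in I$. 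So the variational equation decouples in the $i$-direction: along any orbit in $\SigmaI$ one has $\dot\xi_{i}=f_{i}(\phi_{t}x)\,\xi_{i}$, showing that the line $\reals e_{i}$ is $D\phi_{t}$-invariant and that $\VIi=\VI\oplus\reals e_{i}$ is an invariant splitting of $\CalBi$.

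Next I would identify $\lambda^{(i)}(m)$ with the exponent along $e_{i}$. The Oseledets line in $\CalBi$ whose fiber is not contained in $\VI$ must coincide $m$-a.e.\ with the invariant line $\reals e_{i}$, because any other one-dimensional Oseledets subspace of $\CalBi$ transverse to $\VI$ would force a second $e_{i}$-complementary direction and contradict the uniqueness of the transverse factor. Integrating the scalar equation $\dot\xi_{i}=f_{i}(\phi_{t}x)\xi_{i}$ gives
\begin{equation*}
\log\lVert D\phi_{t}(x)e_{i}\rVert=\int_{0}^{t}f_{i}(\phi_{s}x)\,ds,
\end{equation*}
so by the Birkhoff ergodic theorem (applied to $f_{i}$, which is continuous and hence $m$-integrable on the compact set $\SigmaI$)
\begin{equation*}
\lambda^{(i)}(m)=\int_{\SigmaI} f_{i}\,dm.
\end{equation*}

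It remains to show $f_{i}\geq 0$ everywhere on $\SigmaI$ for each $i\in I$. By Lemma~\ref{unorder} every $y\in\SigmaI$ satisfies $y\leI\xI$, i.e.\ $y_{j}\le x^{[I]}_{j}$ for $j\in I'$; on the other hand $y_{k}=0=x^{[I]}_{k}$ for $k\in I$. Thus $y$ and $\xI$ agree in all coordinates indexed by $I$ (including $i$) and $y$ is coordinatewise $\le\xI$ in coordinates indexed by $I'$. Strong competitiveness says $f_{i}$ is strictly decreasing in every variable $x_{j}$ with $j\neq i$, so
\begin{equation*}
f_{i}(y)\ge f_{i}(\xI)\ge 0,
\end{equation*}
where the last inequality is the equivalent form of hypothesis~(A). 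Integrating against $m$ gives $\lambda^{(i)}(m)\ge 0$, which proves the proposition.

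The only subtle point—and the step I would write out most carefully—is the identification of $\lambda^{(i)}(m)$ with the exponent along $e_{i}$, since one must rule out the possibility that the ``extra'' Oseledets line sits obliquely inside $\VIi$ relative to the splitting $\VI\oplus\reals e_{i}$; the invariance of both summands of this splitting under $D\phi_{t}$ on $\SigmaI$ makes this routine, but it is the one place where the block structure of $DF$ on $\CI$ really has to be invoked.
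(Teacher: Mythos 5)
Your overall strategy---reduce the external exponent to the scalar equation $\dot\eta=f_{i}(\phi_{t}x)\eta$ and then control the sign of $f_{i}$ on $\SigmaI$ via Lemma~\ref{unorder}, strong competitiveness and (A)---is the paper's, and your final paragraph (the pointwise estimate $f_{i}(y)\ge f_{i}(\xI)\ge0$ for $y\in\SigmaI$, $i\in I$) is correct. But the step you yourself single out as the delicate one is wrong as written. The identity $\partial F_{i}/\partial x_{j}=\delta_{ij}f_{i}$ on $\CI$ concerns the $i$-th \emph{row} of $DF$; it shows that $\VI$ is invariant and that the $i$-th coordinate of a solution of the variational equation decouples. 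It does \emph{not} make the line $\reals e_{i}$ invariant: the $i$-th \emph{column} has entries $\partial F_{j}/\partial x_{i}=x_{j}\,\partial f_{j}/\partial x_{i}$, which are strictly negative for $j\in I'$ at points of $\SigmaIo$. Thus $\VIi=\VI\oplus\reals e_{i}$ is only a block-triangular, not an invariant, splitting; the Oseledets subspace transverse to $\VI$ is in general oblique, and $\log\norm{D\phi_{t}(x)e_{i}}$ is in general strictly larger than $\int_{0}^{t}f_{i}(\phi_{s}x)\,ds$ (only the $i$-th coordinate of $D\phi_{t}(x)e_{i}$ satisfies that identity). Your asserted equality $\lambda^{(i)}(m)=\int f_{i}\,dm$ is in fact true, but it must be extracted from the triangular structure (e.g.\ by passing to the quotient cocycle on $\VIi/\VI$, whose single Lyapunov exponent is $\int f_{i}\,dm$ by Birkhoff), not from an invariance of $\reals e_{i}$ that fails.

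The paper sidesteps the identification entirely: it takes a regular point $x\in\supp{m}$ and a vector $v\in(\CalBi)_{x}\setminus\VI$ with $v_{i}>0$, notes that $(D\phi_{t}(x)v)_{i}$ solves $\dot\eta=f_{i}(\phi_{t}x)\eta$, uses that $f_{i}\ge M>0$ uniformly on the compact invariant set $A_{I}$ (here the strictness coming from Lemma~\ref{unorder} is exploited), and concludes from $\norm{D\phi_{t}(x)v}\ge(D\phi_{t}(x)v)_{i}$ and regularity of $x$ that $\lambda^{(i)}(m)\ge M>0$, a one-sided bound that is all the Proposition needs (and is in fact strict positivity). If you replace your claimed equality by this inequality---choosing $v$ in the Oseledets subspace not contained in $\VI$, so that its exponential growth rate is exactly $\lambda^{(i)}(m)$---your argument closes; as written, the identification of the transverse Oseledets line with $\reals e_{i}$ is a genuine error.
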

\begin{proof}
Fix a measure $m\in\Merg(\SigmaI)$ with
$m(\SigmaIo)=1$, and an index $i\in I$.  By
Lemma~\ref{carrier}, $\supp{m}\subset A_{I}$.  Take a
regular point $x\in\supp{m}$ and a vector
$v\in(\CalBi)_{x}\setminus\VI$ such that its
$i$\nobreakdash-\hspace{0pt}th coordinate $v_{i}$ is
positive.  As
$({\partial}F_{i}/{\partial}x_{j})(\phi_{t}x)=0$
for $j\ne i$, and
$({\partial}F_{i}/{\partial}x_{i})(\phi_{t}x)=
f_{i}(\phi_{t}x)$, it follows that the
$i$\nobreakdash-\hspace{0pt}th coordinate
$(D\phi_{t}(x)v)_{i}$ is the solution of the
(nonautonomous) scalar linear  ODE
$\dot\eta=f_{i}(\phi_{t}x)\eta$ with initial condition
$\eta(0)=v_{i}$.  By strong competitiveness and
Lemma~\ref{unorder}, $f_{i}$ is positive on the compact
invariant set $A_{I}\subset\SigmaI$, hence there is $M>0$
such that $f_{i}(\phi_{t}x)\ge M$ for all
$(t,x)\in\reals\times A_{I}$.  The standard theory of
differential inequalities yields
\begin{equation*}
\liminf_{t\to\infty}\frac
{\log(D\phi_{t}(x)v)_{i}}{t}\ge M.
\end{equation*}
$\norm{\cdot}$ is the Euclidean norm on $\Rn$, therefore for
all $t\in\reals$ we have
$\norm{D\phi_{t}(x)v}\ge(D\phi_{t}(x)v)_{i}$.  By
regularity of $x$ we derive
\begin{equation*}
\lim_{t\to\infty}\frac
{\log{\norm{D\phi_{t}(x)v}}}{t}=\lambda^{(i)}(m)\ge M>0.
\end{equation*}
\end{proof}

\section{Proof of Theorem B}
\label{higher-order}
We begin by stating a result being an adaptation of a
theorem of M.~\Benaim.
\begin{theorem}
\label{Benaim's-theorem}
Assume that a $C^{k+1}$, $k=1,\dots$, system \textup{(E)}
satisfies
\begin{enumerate}
\item For each $m\in\Merg(\dSigma)$ all external Lyapunov
exponents are nonnegative.
\item There is $\eta>0$ such that for each
$m\in\Merg(\Sigma)$ the inequality
\begin{equation}
\label{higher-order-condition}
\Lambda_{1}(m)-(k+1)\Lambda_{2}(m)<-\eta
\end{equation}
holds, where $\Lambda_{1}(m)$ and $\Lambda_{2}(m)$
denote respectively the smallest and the second smallest
Lyapunov exponents (on $\Sigma\times V$) for $m$.
\end{enumerate}
Then the carrying simplex $\Sigma$ is a $C^{k+1}$
submanifold-with-corners.
\end{theorem}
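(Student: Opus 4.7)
The plan is to bootstrap the $C^{1}$ conclusion of Theorem~\ref{too-abstract-theorem} to $C^{k+1}$ regularity by combining it with the normally hyperbolic manifold machinery of~\cite{Michel}, applied stratum by stratum. First I would invoke Theorem~\ref{too-abstract-theorem}, whose hypothesis is precisely condition~(1), to obtain that $\Sigma$ is already a $C^{1}$ submanifold-with-corners neatly embedded in $C$, together with a continuous invariant one-dimensional subbundle $\CalS$ of $\Sigma\times V$ transverse to $T\Sigma$ and carrying, for each $m\in\Merg(\Sigma)$, the smallest Lyapunov exponent $\Lambda_{1}(m)\le-\mu<0$. Consequently $\Sigma$ is uniformly normally attracting, and the remaining exponents $\Lambda_{2}(m)<\dots<\Lambda_{l}(m)$ all live on $T\Sigma$.

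Next, I would apply \Benaim's smoothness theorem from~\cite{Michel} to each face separately. For $I\subsetneq\N$ with $\card{I}\le n-2$, the subsystem (E)$_{I}$ is again dissipative, strongly competitive, and permanent (Proposition~\ref{permanence}), and it inherits condition~(1); its carrying simplex is $\SigmaI$. The reverse-time flow of (E)$_{I}$ is strongly monotone with strongly positive derivative on $\CIo$, and \Benaim's theorem asserts that under a uniform spectral bunching $\Lambda_{1}(m)-(k+1)\Lambda_{2}(m)<-\eta$ over $m\in\Merg(\SigmaI)$---which is exactly what condition~(2), specialized to measures supported on $\SigmaI$, provides---the carrying simplex of the subsystem is $C^{k+1}$ on its interior $\SigmaIo$. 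Ranging over all $I$ yields a stratification of $\Sigma$ into open faces, each $C^{k+1}$ in its own right.

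The last step is to glue these stratum-wise regularity results into a global neat $C^{k+1}$ embedding, proceeding by downward induction on $\card{I}$ from the axial rest points up to $\Sigma$ itself. The main obstacle is precisely this gluing: \Benaim's theorem operates in the interior of a strongly monotone region and says nothing \emph{a priori} about corners, where strong monotonicity degenerates. To push the regularity across corners I would use the fact that the splitting $T\Sigma\oplus\CalS$ from the first step is continuous on all of $\Sigma$ (including $\dSigma$) and that, by condition~(1) propagated along axial directions in the manner of the proof of Proposition~\ref{external-nonnegative}, $\CalS_{x}$ stays transverse to $\CI$ at each boundary point $x\in\SigmaI$, aligning with the coordinate directions indexed by $I$. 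This transversality, combined with the interior $C^{k+1}$ regularity of each stratum, is exactly what the definition of a neat $C^{k+1}$ embedding of a submanifold-with-corners (in the sense of~\cite{JM}) requires, and so completes the argument.
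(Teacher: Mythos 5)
Your first step coincides with the paper's: invoke Theorem~\ref{too-abstract-theorem} to get the $C^{1}$ structure and the invariant splitting $\Sigma\times V=T\Sigma\oplus\CalS$ with $\Lambda_{1}(m)\le-\mu<0$ carried by $\CalS$. After that you diverge, and the divergence hides a genuine gap. The paper does not stratify and glue: it converts hypotheses (1)--(2) into two \emph{uniform} exponential estimates valid over all of $\Sigma$ at once --- $\norm{D\phi_{t}(x)v}\le ce^{-\alpha t}\norm{v}$ on $\CalS$, and $\norm{D\phi_{t}(x)v}/\norm{D\phi_{t}(x)w}^{k+1}\le ce^{-\beta t}$ for $v\in\CalS_{x}$, $w\in T_{x}\Sigma\setminus\0$ --- via Prop.~3.3 of~\cite{Michel} (resting on Schreiber~\cite{Sch}), and then makes a single global application of the $C^{k+1}$ section theorem of~\cite{HPS}. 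The nontrivial content is precisely the passage from ergodic-measure statements about Lyapunov exponents to pointwise-in-$x$, uniform-in-$t$ bunching inequalities that hold up to and including $\dSigma$; your proposal never performs this conversion globally, delegating it instead to \Benaim's theorem applied face by face.

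That delegation is where the argument fails. Knowing that each open stratum $\SigmaIo$ is $C^{k+1}$ in its own right, and that the splitting $T\Sigma\oplus\CalS$ is continuous and transverse to the faces at corner points, does not imply that the derivatives of order $2,\dots,k+1$ of a stratum extend continuously to its corners; transversality of a continuous line bundle is a $C^{0}$-level (at best $C^{1}$-level) statement and puts no constraint on higher jets. A one-dimensional caricature: the graph of $x\mapsto x^{3/2}$ over $[0,\infty)$ has analytic strata (the open arc and the endpoint), is a $C^{1}$ manifold-with-boundary, yet is not $C^{2}$ at the boundary point. To rule out exactly this kind of degeneration at $\dSigma$ one needs the bunching estimate to hold uniformly on a neighborhood of the corner strata inside $\Sigma$ --- which is what the paper's two displayed inequalities provide and what the Hirsch--Pugh--Shub section theorem consumes. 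So your outline is salvageable only by replacing the gluing step with the global uniform estimates, at which point the stratification becomes unnecessary.
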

\begin{proof}[Indication of proof]
Theorem~\ref{too-abstract-theorem}.\ref{t-a-2} asserts
that the tangent bundle $TC$ restricted to $\Sigma$
invariantly decomposes as the Whitney sum
$T\Sigma\oplus\CalS$, and for each $m\in\Merg(\Sigma)$
the smallest Lyapunov exponent
$\Lambda_{1}(m)\le-\mu<0$ is the exponential growth
rate of a vector from $\CalS$ while any of the remaining
Lyapunov exponents is the exponential growth rate of a
vector tangent to $\Sigma$.
This, together with (\ref{higher-order-condition}), gives,
with the help of Prop.~3.3 in~\cite{Michel} (based on a result
of S. Schreiber~\cite{Sch}), that there are $c\ge1$,
$\alpha>0$ and $\beta>0$ such that
\begin{align*}
\norm{D\phi_{t}(x)v}&\le ce^{-{\alpha}t}\norm{v}
\quad\text{for $t\ge0$, $(x,v)\in\CalS$,}\\
\intertext{and}
\frac{\norm{D\phi_{t}(x)v}}
{\norm{D\phi_{t}(x)w}^{k+1}}
&\le ce^{-{\beta}t}
\quad\text{for $t\ge0$, $x\in\Sigma$, $v\in(\CalS)_{x}$,
$w\in T_{x}\Sigma\setminus\0$.}
\end{align*}
The rest of the proof consists in applying the $C^{k+1}$
section theorem of Hirsch, Pugh and Shub~\cite{HPS}, as in
the proof of Thm.~3.4 in~\cite{Michel}.
\end{proof}
As a consequence of the above theorem and
Proposition~\ref{external-nonnegative}, we will have
Theorem~B once we prove the following.
\begin{proposition}
Assume that a $C^{k+1}$ system \textup{(E)} satisfies
\textup{(A)} and \textup{(C)}.  Then there exists $\eta>0$
such that for each $m\in\Merg(\Sigma)$ the
inequality~\textup{(\ref{higher-order-condition})} holds.
\end{proposition}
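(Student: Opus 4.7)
The plan is to fix $m\in\Merg(\Sigma)$, split into cases according to the sign of $\Lambda_{2}(m)$, and handle the nontrivial case by transferring the problem to the time-reverse cocycle on an appropriate face, where Benaim's spectral estimates apply.

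By Lemma~\ref{carrier} there is $I\subsetneq\N$ with $\supp m\subset A_{I}$. The Lyapunov spectrum of $m$ on $\Sigma\times V$ decomposes into the internal exponents on the invariant subbundle $\CalB=\SigmaI\times\VI$ and the external exponents $\lambda^{(i)}(m)$, $i\in I$, coming from the transverse bundles $\CalBi$. By Proposition~\ref{external-nonnegative} (which uses (A)) every external exponent is nonnegative, and by Theorem~\ref{too-abstract-theorem}.\ref{t-a-2} we have $\Lambda_{1}(m)\le-\mu$ with $\Lambda_{1}(m)$ internal.

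First, if $\Lambda_{2}(m)\ge 0$, then $\Lambda_{1}(m)-(k+1)\Lambda_{2}(m)\le\Lambda_{1}(m)\le-\mu$, so~(\ref{higher-order-condition}) holds with slack at least $\mu$. The substantive case is $\Lambda_{2}(m)<0$. Since the external exponents are nonnegative, $\Lambda_{2}(m)$ is then internal, so both $\Lambda_{1}(m)$ and $\Lambda_{2}(m)$ are Lyapunov exponents of the variational cocycle restricted to $\CalB$. On $\SigmaIo$ the time-reverse flow is strongly monotone and, because the off-diagonal entries of $DF^{I}(x)$ are negative on $A_I$, the reversed derivative cocycle on $\CalB$ is strongly positive, with quantitative positivity measured precisely by $d(x)$ and $\lambda(x)$ from Section~\ref{Statement}. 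These are exactly the hypotheses of Benaim's Proposition~3.3 in~\cite{Michel}, whose conclusion---translated back to forward time, where the top two exponents of the reversed cocycle become $-\Lambda_{1}(m)$ and $-\Lambda_{2}(m)$---yields a bound of the form $\Lambda_{1}(m)-(k+1)\Lambda_{2}(m)<-\eta_{I}$ with $\eta_{I}>0$ depending only on the slack in (C1) or (C2) for the face $I$. One then sets $\eta:=\min\{\mu,\min_{I\subsetneq\N}\eta_{I}\}$; finiteness of the set of faces keeps this positive.

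The principal obstacle is the precise bookkeeping in the application of Benaim's estimate: one must verify that the positivity parameter used in~\cite{Michel} matches $d_{I}$, that the choice between the norm bound $\sup\norm{DF^{I}}$ and the Rayleigh bound $\lambda_{I}$ corresponds respectively to (C1) and (C2), and that after time reversal the conclusion delivers the required factor of $(k+1)$ on $\Lambda_{2}(m)$ rather than a mere spectral gap $\Lambda_{2}(m)-\Lambda_{1}(m)$. Once this translation is carried out, the resulting inequality, together with the nonnegativity of external exponents, yields the desired uniform $\eta$.
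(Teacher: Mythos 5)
Your proposal is correct and takes essentially the same route as the paper: Bena\"{\i}m's estimate applied to the internal cocycle on $A_{I}$ yields $\eta_{I}$, the nonnegativity of the external exponents together with $\Lambda_{1}(m)\le-\mu$ handles the remaining case, and one takes the minimum over the finitely many faces. Your case split on the sign of $\Lambda_{2}(m)$ is just a reorganization of the paper's split on whether $\lambda_{\mathrm{min}}\ge\Lambda^{*}_{2}(m)$, and the ``bookkeeping'' you flag is exactly the content the paper delegates to Sections~3 and~4 of \cite{Michel}.
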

\begin{proof}
Take $m\in\Merg(\Sigma)$, and let $I\subset\N$ be such
that $m(\SigmaIo)=1$.  From Lemma~\ref{carrier} we have
$\supp{m}\subset A_{I}$.  By results contained in
Sections~3 and~4 of~\cite{Michel} it follows that
under assumption~(C) there is
$\eta_{I}>0$ such that
\begin{equation*}
\Lambda^{*}_{1}(m)-(k+1)\Lambda^{*}_{2}(m)<-\eta_{I}
\end{equation*}
for all $m$ supported on $A_{I}$, where
$\Lambda^{*}_{1}(m)$ [resp.~$\Lambda^{*}_{2}(m)$]
stands for the smallest [resp.~second smallest] internal
Lyapunov exponent for $m$.
Theorem~\ref{too-abstract-theorem}.\ref{t-a-2} gives
$\Lambda^{*}_{1}(m)=\Lambda_{1}(m)$.
Denote by $\lambdamin$ the smallest external Lyapunov
exponent for $m$.  If $\lambdamin\ge\Lambda^{*}_{2}(m)$
then $\Lambda^{*}_{2}(m)=\Lambda_{2}(m)$ and
the inequality~(\ref{higher-order-condition}) is
satisfied with $\eta_{I}$.
Assume that $\lambdamin<\Lambda^{*}_{2}(m)$.  Applying
Theorem~\ref{too-abstract-theorem}.\ref{t-a-2} and
Proposition~\ref{external-nonnegative} we obtain
$\Lambda_{1}(m)\le-\mu<0\le\lambdamin=
\Lambda_{2}(m)$.  Consequently,
$\Lambda_{1}(m)\le-\mu<0\le
(k+1)\Lambda_{2}(m)$.
It suffices to put
\begin{equation*}
\eta:=
\min\{\mu,\eta_{I}:I\subset\N,\card{I}\le n-1\}.
\end{equation*}
\end{proof}

\section{Discussion}
\label{Discussion}
\begin{remark}
\label{May-Leonard}
In formulating our results, we preferred the assumptions to
be easily tractable rather than the weakest possible or
covering a wide range of applications.  In~fact, they can
be substantially weakened, as the following example shows.

A celebrated Lotka--Volterra system due to May and
Leonard~\cite{ML} has the form
\begin{equation}
\label{May-L}
\begin{aligned}
\dot{x}_{1}&=x_{1}(1-x_{1}-{\alpha}x_{2}-{\beta}x_{3})\\
\dot{x}_{2}&=x_{2}(1-{\beta}x_{1}-x_{2}-{\alpha}x_{3})\\
\dot{x}_{3}&=x_{3}(1-{\alpha}x_{1}-{\beta}x_{2}-x_{3})
\end{aligned}
\end{equation}
with $0<\beta<1<\alpha$ and $\alpha+\beta>2$.  It is easily
verified that \eqref{May-L} is dissipative, totally
competitive and has five rest points on $C$: $0$
(repelling), $y$ with $y_{i}=1/(1+\alpha+\beta)$ and three
axial ones $x^{(i)}$ with $x_{i}^{(i)}=1$.  Furthermore,
$\dSigma$ is an attractor relative to $\Sigma$ with $\{y\}$
as its complementary repeller (see pp.~67--68 in the
book~\cite{HS} by Hofbauer and Sigmund).
As a consequence,
$\Merg(\Sigma)=
\{\delta_{y},\delta_{x^{(1)}},\delta_{x^{(2)}},
\delta_{x^{(3)}}\}$.  The Lyapunov exponents for the Dirac
delta on a rest point $x$ are simply the real parts of the
eigenvalues of $DF(x)$.  At $y$ the smallest Lyapunov
exponent is negative and the remaining ones are positive
(see~\cite{HS}).  A simple calculation shows that at an
axial rest point the unique internal Lyapunov exponent
equals $-1$ and the external exponents are $1-\beta>0$ and
$1-\alpha<0$.

Assume now $\alpha<2$.  Then for each ergodic measure on
$\Sigma$ the smallest Lyapunov exponent $-1$ is internal.
Applying ideas of the author's earlier paper~\cite{JM} we
prove that the carrying simplex $\Sigma$ for \eqref{May-L}
is a $C^{1}$ submanifold-with-corners.  Observe that if
$1<\alpha<1+1/l$ for some $l=2,\dots$, then
$-1-l(1-\alpha)$ is negative and one can deduce along the
lines of the proof of Theorem~\ref{Benaim's-theorem} to
conclude that $\Sigma$ is of class $C^{l}$.
\end{remark}
\begin{remark}
The systems (E) satisfying (A) [resp.~(A) and~(C)] are
robust in the sense that if we perturb $f$ in a
neighborhood of $\Sigma$ in the $C^{1}$ [resp.~in the
$C^{k+1}$] topology then the perturbed system possesses a
carrying simplex of class $C^{1}$ [resp.~$C^{k+1}$] (and
each of its subsystems (E)$_{I}$ is permanent).  This
can be proved by reasoning similar to that in the proof of
Cor.~4.3 in~\cite{Michel}.
\end{remark}
\begin{remark}
In~principle, results contained in Section~\ref{Ce-one}
should carry over to the case that we allow $f$ to depend
periodically on $t$, although finding an analog of (A)
might be tricky (for time-periodic Lotka--Volterra strongly
competitive systems, compare e.g.~\cite{Tineo}).
\end{remark}

\end{document}